\newtheorem{theorem}{Theorem}[section]
\newtheorem{proposition}[theorem]{Proposition}
\newtheorem{lemma}[theorem]{Lemma}
\newtheorem{definition}[theorem]{Definition}
\def\b{\beta}
\def\a{\alpha}
\def\lam{\lambda}
\begin{document}
	\title[ Sania Asif \textsuperscript{1}. Wang Yao \textsuperscript{2}]{On the Conformal biderivations and conformal commuting maps on the current Lie Conformal superalgebras}
	\author{ Sania Asif \textsuperscript{1}, Wang Yao \textsuperscript{2}}
   \address{\textsuperscript{1}School of Mathematics and Statistics, Nanjing University of information science and technology, Nanjing, Jianngsu Province, PR China.}
	\address{\textsuperscript{2}School of Mathematics and Statistics, Nanjing University of information science and technology, Nanjing, Jianngsu Province, PR China.}
	

	\email{\textsuperscript{1}11835037@zju.edu.cn, 200036@nuist.edu.cn}
	\email{\textsuperscript{2}wangyao@nuist.edu.cn}
	
	\keywords{Biderivation, Lie conformal superalgebras, Conformal linear commuting map,  Centeroids}
	\subjclass[2010]{16R60, 17B05, 17B40}
	
	\date{\today}
\thanks{ This work was supported by the Jiangsu Natural Science Foundation Project (Natural Science Foundation of Jiangsu Province),
	Relative Gorenstein cotorsion Homology Theory and Its Applications (No.BK20181406) }
	\begin{abstract}
	Let $L$ be a Lie conformal superalgebra and $A$ be an associative commutative algebra with unity. We define the current Lie conformal superalgebra by the tensor product $L \otimes A.$ We prove every conformal super-biderivation $\varphi_{\lambda }$ on $L$ is of the form of the centroid $Cent(L)$. Moreover, we show that every Lie conformal super-biderivation on $L\otimes A$  also has the same performance as $L$. We also prove that every Lie conformal linear super-commuting map $\varPsi_{\lambda }$ on $L \otimes A$ belongs to $Cent(L \otimes A)$, if the same holds for $L$ as well.
	\end{abstract}
\maketitle
\section{Introduction}\label{introduction}
In the past few decades, a remarkable study on the  derivations and generalized derivations on the Lie algebra, Lie conformal algebra and their subalgebras has done by various researchers in \cite{Chen-Ma-Ni, Hopkins,  Zheng-Zhang, Zhou-Chen-Ma-2, Zhou-Chen-Ma-3, Zhou-Chen, Zhao-Chen-Yuan}. The research on derivations and generalized derivations  of Lie algebra contribute a key role in the development of structure theory  of the Lie algebra. First time study on the generalization of Lie algebra and its subalgebra was done by Leger and Lucks in \cite{Leger-Lucks}. Not only the Lie derivations, but their generalizations such as  Lie triple derivations and biderivations have also notable importance in the Lie algebra. This reason has made these derivations, the center of attraction for the many researchers i.e., M\"{u}ller in \cite{Muller} was first to introduce the study of Lie triple derivation of Lie algebra. Moreover, this concept was further extended by many researchers with different perspectives, see  \cite{Lu, Ji-Wang, Asif-Wu, Xiao-Wei, Wu-Asif-Munir, Ji-Wang} for more details. In the same way, the biderivations of the Block Lie algebras, Schr\"{o}dinger-Virasoro Lie algebra  and  Lie conformal superalgebras are also investigated in \cite{Bresar-Zhao, Liu-Guo-Zhao, Wang-Yu, Huang}. 
\par Lie conformal superalgebra was first studied by Kac and Fattori, when they introduced the operator product expansion of chiral fields in conformal field theory \cite{Fattori-Kac}. Lie conformal superalgebras are very important to the study of   quantum field theory, integrable systems, vertex algebras and many more branches of physics and mathematics. The representation theories of finite simple Lie conformal superalgebras are studied and further classified in \cite{Boyllian-Kac-Liberati, Boyllian-Kac-Liberati-Rudakov, Kolesnikov}. Furthermore, some infinite Lie conformal superalgebras, such as Lie conformal superalgebras of Block type and loop super-Virasoro Lie conformal superalgebra were investigated in \cite{Dai-Han, Xia, Chen-Hong-Lie-Su, Zhao-Chen-Yuan2}.
\par By studying the Super-biderivations of the current Lie superalgebras  in  \cite{Zhao-Hassine-Chen, Eremita} and series of other papers on biderivations \cite{Tang-Meng-Chen, Chen, Fan-Dai, Xia-Wang-Han, Wang-Yu, Yuan-Tang}, we introduce the Lie conformal super-biderivations of the current Lie conformal superalgebra. We  also studied the super commuting maps on Lie conformal superalgberas and current Lie conformal superalgebra. More specifically, we show that, for every biderivation of Lie conformal superalgebra there exist a centroid of the form Eq \ref{aiza}. We show that the same concept holds for the current Lie conformal algebra (i.e. tensor product of Lie conformal algebra with associative commutative algebra with identity $L\otimes A$). We proved it with the help of various lemmas.
Moreover, after defining  super commuting map on $L$ and $L\otimes A$, we show the relationship of linear super commuting map of $L\otimes A$ with the centroid of $L\otimes A$. We are optimistic that this work would contribute in the structural theory development of Lie conformal algebra and further it  would serve to many branches of mathematics and physics.
\par The organization of paper is given as follows. In Section $2$, we recollect some  fundamental definitions and show the relationship of biderivations of Lie conformal superalgebra  $L$ with the centroid of $L$. In Section $3$, we
show that if every skew-symmetric super-biderivation on  Lie conformal superalgebra $L$ is of the form of Eq \ref{aiza}, then same holds true for the current Lie conformal algebra $L \otimes A$. In Section $4$, we prove that if each linear super-commuting map on $L$ belongs to centroid of $L$, then linear super-commuting map on $L \otimes A$ also belongs to $Cent(L \otimes A)$.
\par  For convenience we call skew-symmetric super-biderivation by super-biderivation.
\section{preliminaries}
A $Z_2$-graded algebra  is a vector space $V$, such that $V =
V_0 \oplus V_1$. An element $x \in V_i$ is called $Z_2$-homogeneous element  with the degree $|x|$.
 \begin{definition}\label{11}A Lie conformal superalgebra $L$ is a $Z_{2}$-graded $\mathbb{C}[\partial]$-module equipped with the $\lambda$-bracket , which is a $\mathbb{C}$-linear map from $L\otimes L \to \mathbb{C}[\lambda]\otimes L$, $x\otimes y \mapsto [x_\lambda y]$ and  , satisfies the following properties:
		\begin{align}
		[\partial x_\lambda y]&=- \lambda[x_\lambda y], \quad  [x_\lambda \partial y]= (\partial+\lambda)[x_\lambda y] \quad (conformal \  sesquilinearity),\\
		{}[x_\lambda y]&= -(-1)^{|x||y|}[y_{-\lambda- \partial} x]\quad (skew\text{-}symmetry),\\
		{}[x_\lambda [y_\mu z]]&= [[x_\lambda y]_{\lambda+ \mu} z]+ (-1)^{|x||y|}[ y_\mu [x_\lambda z]]\quad (Jacobi\  identity),
		\end{align}
		for $x, y, z \in L$.	
\end{definition}
\begin{definition}\label{12}
	Let $L$ be a Lie conformal superalgebra. A conformal super-biderivation of $L$ is a conformal bilinear map $\varphi_{\lambda}: L \times L \to C[\lambda]\otimes L$, such that it  satisfies the following  set of equations:
\begin{equation}
\varphi_\lambda(x, y) = -(- 1)^{|x||y|}\varphi_{-\partial- \lambda}(y, x),
\end{equation}	
\begin{equation}
\varphi_\lambda (x,[y_\mu z])= [\varphi_\lambda(x, y)_{\lambda+\mu}z]  +(- 1)^{|x||y|} [y_\mu \varphi_\lambda (x, z)]
\end{equation}
\begin{equation}
\varphi_{\lambda+\mu}([x_\mu y], z)=[x_\mu \varphi_{\lambda}(y,z)]-(-1)^{|x||y|} [y_{\lambda}\varphi_{\mu}(x, z)] ,\end{equation} 
\end{definition}Above two equations are equivalent to each other.
A super-biderivation $\varphi_{\lambda}$  of $L$ of the degree $|\varphi|\in Z_{2}$ is a super-biderivation such that $\varphi_\lambda([{L_{\a}}_\mu L_\b]) \subseteq L_{\gamma+ \b+ \a}$ for any $\alpha, \beta, \gamma \in Z_2$. We denote  the set of all skew-symmetric conformal super-biderivations by $BDer_{\gamma}(L)$ with degree $\gamma$. Moreover, $$BDer(L)=  BDer_{\bar{1}}( L)+ BDer_{\bar{0}}(L).$$
\begin{definition}
	A centroid on $Z_2$ graded Lie conformal superalgebra $L = L_0 + L_1$ is  a linear map such that $\alpha_\lambda : L \to C[\lambda]\otimes L$ a linear map. Then
	$Cent (L) =\{\alpha_\lambda : L \to C[\lambda]\otimes L :\alpha_\lambda([x_{\mu} y])=  (-1)^{|x||\a|}[x_{\mu} \alpha_{\lambda}(y) ] 
	~~\forall~~ x, y \in L\}$
	is called the centroid on $L$. The set of all  centroid of degree $|\a|$ is denoted by $Cent_{\a}(L) $. We can write, $Cent(L) = Cent_1(L)
	 +Cent_0 (L) $
	\begin{equation}
	\begin{aligned}
	\alpha_{\lambda}[x_{\mu} y]=&-(-1)^{|x||y|}\alpha_\lambda[y_{-\partial-\mu} x]\\= & -(-1)^{|x||y|}(-1)^{|y||\a|}[y_{-\partial-\mu} \alpha_{\lambda}(x)]
	\\=&(-1)^{|x||y|}(-1)^{|y||\a|}(-1)^{(|x|+|\a|)|y|}[\alpha_{\lambda}(x)_{\mu} y]
	\\=&[\alpha_\lambda(x)_{\mu} y]
	\end{aligned}
	\end{equation}
So we can say that  $Cent (L) =\{\alpha_{\lambda} : L \to C[\lambda]\otimes L :\alpha_\lambda([x_\mu y])=  (-1)^{|x||\a|}[x_{\mu} \alpha_{\lambda}(y) ] =[\alpha_\lambda(x)_{\mu} y],~~
	\forall~~ x, y \in L\}$.
\end{definition}
\begin{definition} Let  $S$ be a subset of  a Lie conformal superalgebra $L$, then  centralizer of $S$ in $L$ is given by
\begin{equation}Z_L(S) = \{x \in L : [x_\lambda y] = 0 ,~~~~\forall y \in S\}.
\end{equation}
\end{definition}
\begin{definition}Center of  a Lie conformal superalgebra $L$ is defined by
	\begin{equation}
	Z_L(L)= Z(L) = \{x \in L :[x_\lambda y] = 0,~~ \forall y \in L\}.
	\end{equation}
\end{definition}
\begin{lemma}\label{s}
Let a conformal super-biderivation of $L$ is denoted by $\varphi_{\lambda }$,  then we have
\begin{equation}\label{n}[(\varphi_\lambda(x, y))_{\lambda+ \mu}[w_\gamma v]] = [[x_\mu y]_{\mu+ \gamma} \varphi_{\lambda}(w, v)],
\end{equation} for all homogeneous $x, y,  v, w \in L$
\end{lemma}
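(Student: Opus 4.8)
The plan is to obtain \eqref{n} by evaluating the super-biderivation $\varphi_\lambda$ on a triple $\lambda$-bracket in two different ways and matching the outcomes. The natural starting point is the second-slot derivation rule for $\varphi_\lambda$ (the second displayed identity of Definition \ref{12}). Applying it with the inner argument taken to be $[w_\gamma v]$, I would write
\begin{equation*}
\varphi_\lambda\big(x,[y_\mu[w_\gamma v]]\big)=\big[\varphi_\lambda(x,y)_{\lambda+\mu}[w_\gamma v]\big]+(-1)^{|x||y|}\big[y_\mu\,\varphi_\lambda(x,[w_\gamma v])\big],
\end{equation*}
so that the left-hand side of \eqref{n} appears as the first summand. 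Hence it would suffice to show that $\varphi_\lambda(x,[y_\mu[w_\gamma v]])$, minus the correction term $(-1)^{|x||y|}[y_\mu\,\varphi_\lambda(x,[w_\gamma v])]$, collapses to $[[x_\mu y]_{\mu+\gamma}\varphi_\lambda(w,v)]$.

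To produce the right-hand side I would expand the same quantity $\varphi_\lambda(x,[y_\mu[w_\gamma v]])$ along a second route: first rewrite the inner bracket $[y_\mu[w_\gamma v]]$ by the Jacobi identity of Definition \ref{11}, then push $\varphi_\lambda(x,-)$ through each resulting term again by the second-slot rule, and on the brackets that remain invoke the first-slot derivation rule (the third identity of Definition \ref{12}, which the text records as equivalent to the second). The effect of these moves is to transport the derivation from the pair $(x,y)$ onto the pair $(w,v)$, which is exactly what is needed to manufacture the diagonal term $[[x_\mu y]_{\mu+\gamma}\varphi_\lambda(w,v)]$, at the cost of several cross terms carrying $\varphi_\lambda(x,w)$ and $\varphi_\lambda(x,v)$. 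Finally I would expand the correction term $[y_\mu\,\varphi_\lambda(x,[w_\gamma v])]$ by the second-slot rule as well, and cancel it against those cross terms using the Jacobi identity and the skew-symmetry of the $\lambda$-bracket, together with the super-skew-symmetry \eqref{n}-preceding relation for $\varphi_\lambda$ to fix the signs $(-1)^{|\cdot||\cdot|}$.

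The step I expect to be the real obstacle is this final cancellation, and in particular the bookkeeping of the three spectral variables $\lambda,\mu,\gamma$. Each application of the derivation rules and the Jacobi identity splits a bracket and shifts a spectral parameter (for instance $\gamma\mapsto\lambda+\gamma$ or $\mu\mapsto\lambda+\mu$), and the conformal sesquilinearity of Definition \ref{11} further introduces $\partial$-dependent shifts when $\varphi_\lambda(x,y)$ or $[w_\gamma v]$ is placed in the first slot of an outer bracket. The delicate point is therefore to keep every spectral label and every sign correctly attached through the repeated expansions, and to verify that the cross terms truly cancel in pairs so that only the two diagonal terms of \eqref{n} survive; this is where the precise placement of the subscripts $\lambda+\mu$ and $\mu+\gamma$ must be checked with care, since any mismatch there is exactly what would break the identity.
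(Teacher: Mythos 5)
First, a point of context: the paper states Lemma \ref{s} with no proof at all, so the comparison here is with the standard argument from the biderivation literature (Bre\v{s}ar--Zhao \cite{Bresar-Zhao}, adapted to Lie conformal superalgebras in \cite{Huang}), which evaluates $\varphi$ on a \emph{pair} of brackets and expands it once by each of the two derivation rules. Measured against that, your proposal has a genuine gap, and it is not the sign and spectral-parameter bookkeeping you flag at the end. Both of your routes for expanding $\varphi_\lambda(x,[y_\mu[w_\gamma v]])$ use only the Jacobi identity of $L$ and the second-slot rule, i.e.\ only the fact that $D:=\varphi_\lambda(x,-)$ is a conformal derivation. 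For \emph{any} conformal derivation $D$, expanding $D([y_\mu[w_\gamma v]])$ directly and expanding it after first applying Jacobi give identically equal results, so equating your two routes yields no information about $\varphi$ beyond its derivation property in the second slot: every $\varphi$-term that can be generated has $x$ in one of its slots (namely $\varphi_\lambda(x,y)$, $\varphi_\lambda(x,w)$, $\varphi_\lambda(x,v)$ and brackets of these), the cross terms cancel by Jacobi alone, and what survives is an instance of the Jacobi identity for the element $\varphi_\lambda(x,y)$ --- never the diagonal term $[[x_\mu y]_{\mu+\gamma}\varphi_\lambda(w,v)]$, which involves a value of $\varphi$ at a pair not containing $x$. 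Your stated plan to ``invoke the first-slot derivation rule on the brackets that remain'' cannot be executed as written: that rule applies to expressions $\varphi_{\lambda+\mu}([x_\mu y],z)$ whose \emph{first} argument is a bracket, and no such term ever arises in your expansion, because the first slot of $\varphi$ stays equal to the single element $x$ throughout (applying the skew-symmetry of $\varphi$ only swaps the slots and does not change this).

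The missing idea is that the first-slot rule must be used \emph{in reverse}, to create a bracket inside the first slot of $\varphi$ and thereby bring $\varphi(w,v)$ into existence at all; for instance
\begin{equation*}
[w_\gamma\,\varphi_\lambda(x,v)]=\varphi_{\lambda+\gamma}([w_\gamma x],v)+(-1)^{|w||x|}[x_\lambda\,\varphi_\gamma(w,v)]
\end{equation*}
is the first point at which a term $\varphi(w,v)$ can appear, and only after such a move can Jacobi reassemble the outer factors into $[[x_\mu y]_{\mu+\gamma}\varphi_\lambda(w,v)]$. Cleaner, and standard, is to abandon your starting point altogether: expand $\varphi$ applied to the pair of brackets, $\varphi([x_\mu y]_{\lambda}\,[w_\gamma v])$ so to speak, once by the first-slot rule and once by the second-slot rule, equate the two expansions, and finish with the Jacobi identity and the skew-symmetry of $\varphi$; the lemma is precisely the compatibility statement between the two derivation laws, so any proof must set them against each other, which your single-slot expansion never does. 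As it stands, your proposal defers exactly this content to ``the final cancellation'', but the obstacle there is not delicate bookkeeping: the route you chose provably cannot produce the right-hand side of \eqref{n}.
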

\begin{lemma}\label{a}
	Let $\varphi_{\lambda }$ be a super-biderivation  of  a Lie conformal superalgebra $L$, then we have 
	\begin{equation}[\varphi_\lam(x, y_1)_{\lam+\mu} y_2]- \varphi_{\lambda+\mu}([x_\mu y_1], y_2) \in Z_{L}(L)	\end{equation}for all $x,y_1,y_2 \in L.$
\end{lemma}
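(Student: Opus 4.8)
The plan is to verify the membership directly from the definition of the centre. Writing
$u=[\varphi_\lambda(x,y_1)_{\lambda+\mu}y_2]-\varphi_{\lambda+\mu}([x_\mu y_1],y_2)\in\mathbb{C}[\lambda,\mu]\otimes L$, by the definition of $Z_{L}(L)$ it suffices to show that $[u_\gamma w]=0$ for every $\mathbb{Z}_2$-homogeneous $w\in L$, where $\gamma$ is a fresh variable for the outer bracket; the conclusion $u\in Z_{L}(L)$ then holds coefficientwise in $\lambda,\mu$. By bilinearity and homogeneity I may assume $x,y_1,y_2,w$ are all homogeneous.

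First I would treat the first summand. Applying the Jacobi identity of Definition~\ref{11} to the double bracket $[[\varphi_\lambda(x,y_1)_{\lambda+\mu}y_2]_\gamma w]$ splits it as $[\varphi_\lambda(x,y_1)_{\lambda+\mu}[{y_2}_{\gamma-\lambda-\mu}w]]$ minus a super-sign times $[{y_2}_{\gamma-\lambda-\mu}[\varphi_\lambda(x,y_1)_{\lambda+\mu}w]]$. To the first of these I would apply Lemma~\ref{s}, i.e.\ the identity \eqref{n}, which rewrites it as $[[x_\mu y_1]_{\gamma-\lambda}\varphi_\lambda(y_2,w)]$; the second piece, in which $w$ stays outside the biderivation, is set aside for later cancellation. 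Next I would expand the second summand by the first-slot derivation identity of Definition~\ref{12}, namely $\varphi_{\lambda+\mu}([x_\mu y_1],y_2)=[x_\mu\varphi_\lambda(y_1,y_2)]-(-1)^{|x||y_1|}[{y_1}_\lambda\varphi_\mu(x,y_2)]$, then bracket with $w$, apply the Jacobi identity to each resulting double bracket, and finally invoke Lemma~\ref{s} on the inner brackets of the form $[\varphi(\cdot,\cdot)_\bullet[\cdot_\bullet w]]$.

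The computation then separates into \emph{absorbed} terms, in which $w$ has been drawn into a biderivation through Lemma~\ref{s}, and \emph{free} terms, in which $w$ remains an outer bracket argument. The absorbed terms arising from the two summands should cancel once their spectral shifts are aligned by the conformal sesquilinearity relations of Definition~\ref{11}. The free terms are exactly the three expressions $[{y_2}_{\gamma-\lambda-\mu}[\varphi_\lambda(x,y_1)_{\lambda+\mu}w]]$, $[x_\mu[\varphi_\lambda(y_1,y_2)_{\gamma-\mu}w]]$ and $[{y_1}_\lambda[\varphi_\mu(x,y_2)_{\gamma-\lambda}w]]$, and it is here that the skew-symmetry of the biderivation, $\varphi_\nu(a,b)=-(-1)^{|a||b|}\varphi_{-\partial-\nu}(b,a)$, is indispensable: rewriting each $\varphi$-value by this relation, combined with the skew-symmetry of the $\lambda$-bracket, should reorganise the three into a combination that cancels.

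I expect the main obstacle to be precisely this last step. The free terms carry mismatched spectral parameters ($\varphi_\lambda$ against $\varphi_\mu$) and nontrivial super-signs $(-1)^{|x||y|}$, so forcing their cancellation requires repeated and careful use of skew-symmetry together with the sesquilinearity shifts, and the sign bookkeeping is the principal source of error. That skew-symmetry is genuinely needed is not a technicality: for a non-skew biderivation the analogous difference need not be central, so any successful argument must use the relation of Definition~\ref{12} in an essential way rather than only the two derivation identities and Lemma~\ref{s}.
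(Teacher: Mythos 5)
Your strategy is genuinely different from the paper's, and it has a real gap at exactly the point you flag as ``the main obstacle'': the cancellations you are counting on do not occur. After your decomposition, the \emph{free} terms are (up to signs) $[{y_2}_{\gamma-\lambda-\mu}[\varphi_\lambda(x,y_1)_{\lambda+\mu}w]]$, $[x_\mu[\varphi_\lambda(y_1,y_2)_{\gamma-\mu}w]]$ and $[{y_1}_\lambda[\varphi_\mu(x,y_2)_{\gamma-\lambda}w]]$, which involve the three \emph{distinct} values $\varphi(x,y_1)$, $\varphi(y_1,y_2)$, $\varphi(x,y_2)$. The skew-symmetry relation $\varphi_\nu(a,b)=-(-1)^{|a||b|}\varphi_{-\partial-\nu}(b,a)$ only swaps the two arguments of a single value and shifts its parameter; it can never convert one of these values into another, so no amount of sign and sesquilinearity bookkeeping makes the three terms collapse. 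The same objection hits your \emph{absorbed} terms, which work out to (up to signs) $[[x_\mu y_1]_{\gamma-\lambda}\varphi_\lambda(y_2,w)]$, $[[{y_1}_{\gamma-\lambda-\mu}y_2]_{\gamma-\lambda}\varphi_\lambda(x,w)]$ and $[[x_{\gamma-\lambda-\mu}y_2]_{\gamma-\mu}\varphi_\mu(y_1,w)]$: again three unrelated values $\varphi(y_2,w)$, $\varphi(x,w)$, $\varphi(y_1,w)$ against different commutators. Asserting that the total sum vanishes is just restating the lemma, so at the decisive step the proposal is circular. Attempting to re-expand a free term such as $[\varphi_\lambda(x,y_1)_{\lambda+\mu}w]$ through the derivation identity of Definition \ref{12} only produces $\varphi$-values with new arguments like $(x,[{y_1}_\mu w])$ and makes the expression grow rather than close.

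The root cause is that you test centrality against a \emph{single} element $w$. The only tool that moves $\varphi$ across a bracket is Lemma \ref{s}, and it applies only when the opposite side is itself a commutator $[w_\gamma v]$; once a term $[\varphi_\bullet(\cdot,\cdot)_\bullet w]$ with a bare $w$ appears, nothing can process it. The paper never leaves the ``against a commutator'' world: it takes the identity of Lemma \ref{s}, substitutes $y=[{y_1}_\eta y_2]$ into its second slot, expands $[x_\mu[{y_1}_\eta y_2]]$ by the Jacobi identity, and applies Lemma \ref{s} again to each of the two resulting pieces, obtaining
\begin{equation*}
[(\varphi_\lambda([x_\mu y_1],y_2)+(-1)^{|x||y_1|}\varphi_\lambda(y_1,[x_\mu y_2])-\varphi_\lambda(x,[{y_1}_\eta y_2]))_{\lambda+\mu}[w_\gamma v]]=0,
\end{equation*}
after which skew-symmetry of $\varphi$ and the two derivation identities of Definition \ref{12} rewrite the inner combination as twice $[\varphi_\lambda(x,y_1)_{\lambda+\mu}y_2]-\varphi_{\lambda+\mu}([x_\mu y_1],y_2)$. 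Note that this argument --- and any argument built on Lemma \ref{s} --- only shows that this difference annihilates all commutators $[w_\gamma v]$, i.e.\ it lies in the centralizer of the derived subalgebra $[L_\lambda L]$ rather than literally satisfying $[u_\gamma w]=0$ for every $w\in L$; that weaker conclusion is all the paper's method delivers, and it suffices for the application in Lemma \ref{sania}, where $L$ is perfect. Your plan aims at the stronger pointwise statement, which is precisely why it cannot be closed with the available tools; to repair it, replace the single test element $w$ by a commutator $[w_\gamma v]$ throughout, at which point every term becomes absorbable by Lemma \ref{s} and the computation reduces to the paper's.
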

\begin{proof}
	As we know by Lemma \ref{s} that, \begin{equation}[[x_\mu y]_{\mu+\gamma} \varphi_{\lambda}(w, v)] = [(\varphi_\lambda(x, y))_{\lambda+\mu}[w_\gamma v]].
	\end{equation}
	 By replacing $y$ by $[{y_1}_\eta y_2]$ to both side of above equation, we have
	\begin{equation}\label{i}[[x_\mu [{y_1}_\eta y_2]]_{\mu+ \gamma} \varphi_{\lambda}(w, v)] = [(\varphi_\lambda(x, [{y_1}_\eta y_2]))_{\lambda+ \mu}[w_\gamma v]].
	\end{equation} By Jacobi identity left side become
	\begin{equation*}[([[x_\mu {y_1}]_{\eta+ \mu} y_2]+(-1)^{xy_1}[{y_1}_\eta[x_\mu y_2]])_{\mu+\gamma} \varphi_{\lambda}(w, v)] = [(\varphi_\lambda(x, [{y_1}_\eta y_2]))_{\lambda+\mu}[w_\gamma v]],
	\end{equation*}
	\begin{equation}[([[x_\mu {y_1}]_{\eta+\mu} y_2])_{\mu+\gamma} \varphi_{\lambda}(w, v)]+(-1)^{xy_1}[([{y_1}_\eta[x_\mu y_2]])_{\mu+\gamma} \varphi_{\lambda}(w, v)] = [(\varphi_\lambda(x, [{y_1}_\eta y_2]))_{\lambda+\mu}[w_\gamma v]],
	\end{equation}
	By Eq \ref{n}, we have\begin{equation}\label{t}[\varphi_\lambda([x_\mu y_1], y_2)_{\lambda+\mu}[w_\gamma v ]]+ (-1)^{xy_1}[\varphi_\lambda(y_1, [ x_\mu y_2])_{\lambda+\mu}[w_\gamma v ]] = [(\varphi_\lambda(x, [{y_1}_\eta y_2]))_{\lambda+\mu}[w_\gamma v]].
	\end{equation}	
    Comparing Eqs \ref{i} and \ref{t}
	\begin{equation}[[x_\mu [{y_1}_\eta y_2]]_{\mu+ \gamma} \varphi_{\lambda}(w, v)] =[\varphi_\lambda([x_\mu y_1], y_2)_{\lambda+ \mu}[w_\gamma v ]]+ (-1)^{xy_1}[\varphi_\lambda(y_1, [ x_\mu y_2])_{\lambda+\mu}[w_\gamma v ]],\end{equation}
	or\begin{equation*}[(\varphi_\lambda(x, [{y_1}_\eta y_2]))_{\lambda+ \mu}[w_\gamma v]]= [\varphi_\lambda([x_\mu y_1], y_2)_{\lambda+ \mu}[w_\gamma v ]]+ (-1)^{xy_1}[\varphi_\lambda(y_1, [ x_\mu y_2])_{\lambda+\mu}[w_\gamma v ]]\end{equation*}
	\begin{equation}0= [(-\varphi_{\lambda}(x, [{y_1}_\eta y_2])+ \varphi_\lambda([x_\mu y_1], y_2)+ (-1)^{xy_1}\varphi_\lambda(y_1, [ x_\mu y_2]))_{\lambda+ \mu}[w_\gamma v]].\end{equation}
	Now we can write
	\begin{equation}0= [(\varphi_{\lambda}(x, [{y_1}_\mu y_2])+ (-1)^{y_2(x+ y_1)}\varphi_{-\partial - \lambda}(y_2, [x_\mu y_1], )- (-1)^{xy_1}\varphi_\lambda(y_1, [ x_\mu y_2]))_{\lambda+ \mu}[w_\gamma v]].\end{equation}
	As we now that $\varphi_\lam$ is a skew symmetric bi-derivation, then it follows from the above equation that
	\begin{equation}
	[(2[\varphi_\lam(x, y_1)_{\lam+ \mu} y_2]-2[x_{\mu}\varphi_\lambda(y_1, y_2)]+ 2(-1)^{xy_1}[{y_1}_\mu\varphi_{\lambda }(x, y_2)])_{\lambda+ \mu}[w_\gamma v]]=0.
	\end{equation}
	From this we can easily get
	\begin{equation}[\varphi_\lam(x, y_1)_{\lam+\mu} y_2]- \varphi_{\lambda+\mu}([x_\mu y_1], y_2) \in Z_{L}(L),
	\end{equation}
	as desired.
    \end{proof}
    \begin{lemma}\label{sania}	Let $\varphi_{\lambda }$ be a super-biderivation  of  a perfect Lie conformal superalgebra $L$, then we have
	\begin{equation}\label{asif}[\varphi_\lam(x, y_1)_{\lam+ \mu} y_2]= \varphi_{\lambda+ \mu}([x_\mu y_1], y_2)
	\end{equation}
\end{lemma}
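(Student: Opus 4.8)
The plan is to promote the conclusion of Lemma \ref{a} from \emph{``the defect lies in the centre''} to \emph{``the defect vanishes''}, and the only new ingredient I intend to use is perfectness. Write
\[
D(x,y_1,y_2):=[\varphi_\lambda(x,y_1)_{\lambda+\mu}y_2]-\varphi_{\lambda+\mu}([x_\mu y_1],y_2).
\]
By Lemma \ref{a} we already know $D(x,y_1,y_2)\in Z_L(L)$ for all homogeneous $x,y_1,y_2$. Since $L$ is perfect, it is spanned over $\mathbb{C}[\partial]$ by brackets $[a_\nu b]$, and since $D(x,y_1,\,\cdot\,)$ is additive and compatible with the action of $\partial$ in its last slot (by conformal sesquilinearity of both $[\,\cdot_{\lambda+\mu}\,\cdot\,]$ and $\varphi$), it suffices to prove $D(x,y_1,[a_\nu b])=0$ for homogeneous $a,b\in L$; the general statement then follows by $\mathbb{C}[\partial]$-linearity.

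To carry this out I would substitute $y_2=[a_\nu b]$ and expand each of the two terms of $D$ separately: the first term $[\varphi_\lambda(x,y_1)_{\lambda+\mu}[a_\nu b]]$ by the Jacobi identity, treating $\varphi_\lambda(x,y_1)$ simply as an element of $L$, and the second term $\varphi_{\lambda+\mu}([x_\mu y_1],[a_\nu b])$ by the second-slot derivation axiom of the biderivation. Each expansion splits into a ``$b$-outer'' piece and an ``$a$-outer'' piece. In the $b$-outer pair the two coefficients combine to give
\[
\bigl[\,\bigl([\varphi_\lambda(x,y_1)_{\lambda+\mu}a]-\varphi_{\lambda+\mu}([x_\mu y_1],a)\bigr)_{\lambda+\mu+\nu}b\,\bigr]=[\,D(x,y_1,a)_{\lambda+\mu+\nu}b\,],
\]
which is $0$ because $D(x,y_1,a)\in Z_L(L)$. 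For the $a$-outer pair I would rewrite $[\varphi_\lambda(x,y_1)_{\lambda+\mu}b]=D(x,y_1,b)+\varphi_{\lambda+\mu}([x_\mu y_1],b)$; the contribution of $D(x,y_1,b)$ is $[a_\nu D(x,y_1,b)]$, which vanishes once we push the central element across the bracket using skew-symmetry, and the two remaining $\varphi_{\lambda+\mu}([x_\mu y_1],b)$-terms cancel.

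The step I expect to be the main obstacle is the super-sign bookkeeping in that last cancellation. The Jacobi expansion contributes the sign $(-1)^{|\varphi_\lambda(x,y_1)||a|}$, governed by the true degree $|\varphi|+|x|+|y_1|$ of $\varphi_\lambda(x,y_1)$, whereas the derivation axiom contributes a sign governed by the degree $|x|+|y_1|$ of the first argument $[x_\mu y_1]$; these two signs agree \emph{precisely} when the degree $|\varphi|$ of the biderivation is correctly incorporated into the derivation axiom, and it is exactly this matching that forces the $a$-outer $\varphi$-terms to cancel rather than double. I would therefore track $|\varphi|$ carefully throughout and, separately, be careful to invoke centrality of $D(x,y_1,b)$ on the \emph{right} of a $\lambda$-bracket, converting it to the left via skew-symmetry so that the defining property of $Z_L(L)$ applies. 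With the signs reconciled, every surviving term is zero, so $D(x,y_1,[a_\nu b])=0$; perfectness and linearity then yield $[\varphi_\lambda(x,y_1)_{\lambda+\mu}y_2]=\varphi_{\lambda+\mu}([x_\mu y_1],y_2)$ for all $x,y_1,y_2\in L$, as claimed.
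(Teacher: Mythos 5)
Your proof is correct, but it takes a genuinely different route from the paper's, and it is in fact stronger. The paper's proof of Lemma \ref{sania} is one line: it invokes Lemma \ref{a} together with the assertion that a perfect Lie conformal superalgebra automatically has zero center, so the central defect must vanish. That assertion is false in general: perfectness does not force $Z_L(L)=0$ (any perfect Lie algebra with nontrivial center, e.g.\ an affine Kac--Moody algebra, produces via the current construction a perfect Lie conformal (super)algebra with nontrivial center), and tellingly the paper quietly adds the hypothesis ``centerless'' in the theorem that follows. Your argument repairs exactly this gap: you use perfectness for what it actually gives --- every element of $L$ is a $\mathbb{C}[\partial]$-combination of coefficients of brackets $[a_\nu b]$ --- and you then show $D(x,y_1,[a_\nu b])=0$ by expanding the first term of $D$ with the Jacobi identity and the second with the second-slot derivation axiom of Definition \ref{12}, so that everything collapses to $[D(x,y_1,a)_{\lambda+\mu+\nu}b]\pm[a_\nu D(x,y_1,b)]$; both summands die because the values of $D$ lie in $Z_L(L)$ by Lemma \ref{a} (the second after moving the central element to the left slot via skew-symmetry, as you note). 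So you obtain the lemma as literally stated, for every perfect $L$, whereas the paper's proof is valid only under the additional centerless hypothesis. One caveat, which you correctly flag yourself: the cancellation of the two $a$-outer $\varphi$-terms requires the derivation axiom with the degree-aware sign $(-1)^{(|\varphi|+|x|)|y|}$, while Definition \ref{12} as printed carries the sign $(-1)^{|x||y|}$ without $|\varphi|$; since the paper itself uses $|\varphi|$-dependent signs in its later computations (e.g.\ throughout Lemma \ref{zari}), your reading is the internally consistent one, but when writing the proof up you should state that convention explicitly, since with the paper's literal sign the $a$-outer terms would fail to cancel whenever $|\varphi|=|a|=\bar{1}$.
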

\begin{proof}As the center of perfect Lie conformal superalgebra $L$ is zero, so with the help of Lemma \ref{a}, we have	\begin{equation}[\varphi_\lam(x, y_1)_{\lam+\mu} y_2]- \varphi_{\lambda+\mu}([x_\mu y_1], y_2)=0
\end{equation} Hence we get our desired result.
\end{proof}
\begin{theorem}Every skew symmetric biderivaton of a center less perfect Lie conforml superalgebra $L$ is of the form \begin{equation}
	\varphi_{\lambda}(x,y)=\a_{\mu}([x_\lam  y]), ~~~~~~ \forall ~x,y\in L.
	\end{equation} \end{theorem}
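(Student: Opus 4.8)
The plan is to manufacture the centroid directly from the biderivation by setting $\alpha([x_\lambda y]) := \varphi_\lambda(x,y)$ and extending the assignment over all of $L$. Because $L$ is perfect we have $L = [L_\lambda L]$, so every element of $L$ is a finite sum of coefficients of $\lambda$-brackets; hence this prescription, once shown to be consistent, pins down a single map $\alpha$ on all of $L$. Two points then need to be settled: that the value $\varphi_\lambda(x,y)$ depends on the pair $(x,y)$ only through the element $[x_\lambda y]$ (well-definedness), and that the resulting $\alpha$ satisfies the defining relation of $Cent(L)$.

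For well-definedness I would use Lemma~\ref{s}, which reads $[(\varphi_\lambda(x,y))_{\lambda+\mu}[w_\gamma v]] = [[x_\mu y]_{\mu+\gamma}\varphi_\lambda(w,v)]$. The right-hand side sees the pair $(x,y)$ only through the conformal bracket $[x_\mu y]$. Consequently, whenever $\sum_i [{x_i}_\lambda y_i] = 0$ --- an identity of polynomials in the bracket parameter, hence valid for every value of that parameter --- summation over $i$ yields $[(\sum_i \varphi_\lambda(x_i,y_i))_{\lambda+\mu}[w_\gamma v]] = 0$ for all $w,v$ and all $\gamma,\mu$. Since $L$ is perfect the elements $[w_\gamma v]$ span $L$, and since $L$ is centerless any element bracketing to zero against all of $L$ must vanish; therefore $\sum_i \varphi_\lambda(x_i,y_i) = 0$, which is precisely what is needed for $\alpha$ to be well defined.

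To see that $\alpha \in Cent(L)$ I would invoke Lemma~\ref{sania}, whose conclusion $[\varphi_\lambda(x,y_1)_{\lambda+\mu}y_2] = \varphi_{\lambda+\mu}([x_\mu y_1], y_2)$ can be re-read through the definition of $\alpha$: the right-hand side becomes $\alpha$ applied to the bracket $[[x_\mu y_1]_{\lambda+\mu}y_2]$, while the left-hand side becomes the bracket $[\alpha([x_\lambda y_1])_{\lambda+\mu}y_2]$. After matching the conformal parameters this is exactly the centroid identity $\alpha([u_\nu w]) = [\alpha(u)_\nu w]$ for $u$ a bracket, and perfectness promotes it to all $u\in L$; the equivalent form $(-1)^{|u||\alpha|}[u_\nu\alpha(w)]$ then follows from skew-symmetry together with the computation displayed in the definition of $Cent(L)$. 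Unwinding $\alpha([x_\lambda y]) = \varphi_\lambda(x,y)$ gives the asserted formula.

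The hard part will be the parameter bookkeeping. In the purely algebraic (non-conformal) case this argument is standard, but here one must check that the dependence of $\varphi_\lambda(x,y)$ on $(x,y)$ is genuinely through the full conformal bracket $[x_\bullet y]$ rather than through its value at a single parameter, that the centroid $\alpha$ may be taken $\mathbb{C}[\partial]$-linear and compatible with sesquilinearity, and that all $Z_2$-grading signs are carried correctly when passing between the two equivalent forms of the centroid condition. Once this bookkeeping is organized, well-definedness and the centroid property drop out of Lemmas~\ref{s} and~\ref{sania} exactly as above.
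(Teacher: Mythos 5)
Your proposal matches the paper's proof in all essentials: both define $\alpha$ by the prescription $\alpha([x_\lambda y]) := \varphi_\lambda(x,y)$, verify well-definedness on relations $\sum_i [{x_i}_\mu y_i] = 0$ by pushing $\varphi$ inside an outer bracket and invoking centerlessness, and then read off the centroid identity from Lemma~\ref{sania}. The only immaterial difference is that your well-definedness step works from Lemma~\ref{s} directly (bracketing against $[w_\gamma v]$ and using perfectness to span $L$) while the paper applies Lemma~\ref{sania} against an arbitrary $z$; both rest on the same lemma chain, and your explicit flagging of the conformal-parameter bookkeeping is, if anything, more careful than the paper's own treatment, which glosses over it.
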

     \begin{proof}
	Let us define a  linear map $\a_{\mu}:L\to C[\lambda]\otimes L$ such that
	\begin{equation}\label{aiza}
	\a_{\mu}([x_\lam  y])=\varphi_{\lambda}(x,y),~~~~~~ \forall ~x,y\in L.
	\end{equation}
	we first show by the Lemma \ref{sania} that , $\a_{\mu}$ is well defined. For this we assume that $x=\sum_{i}[{x_i}_\mu y_i]=0$, so we have
	\begin{equation}
	\begin{aligned}
	0&=\varphi_{\lam}(\sum_{i}[{x_i}_\mu y_i], z)\\&=\sum_{i}\varphi_{\lam}([{x_i}_\mu y_i],z)\\&=\sum_{i}\varphi_{\gamma}([{x_i}_\mu y_i]_{\lambda} z) \\&=\sum_{i}([\varphi_{\mu}(x_i,y_i )_{\lam} z]
	\end{aligned}
	\end{equation} As center of $L$ is zero, we have that $\varphi_{\mu}(x_i, y_i )=0$. Moreover, we  can write Eq \ref{asif} as, $\varphi_{\lam}(x,z)= [(\varphi_{\ \mu }(x))_{\lam}z]$. So this expression along with Eq \ref{aiza} , implies that $\a_{\mu}$ is in the centroid of $L$.
\end{proof}
\section{Lie conformal super-biderivations on $L \otimes A$}
Let  a Lie conformal superalgebra is denoted by $L$ and an associative commutative algebra with unity is denoted by $A$,  then we define $L \otimes A$ is a  tensor product superalgebra over a field $F$. Under the Lie conformal bracket operation  $L \otimes A$ becomes a Lie conformal superalgebra and we name it current Lie conformal superalgebras. For more detail about it consider the lemma given below. 
\begin{lemma}Tensor product superalgebra $L \otimes A$ of a Lie  conformal superalgebra $L$ and an associative commutative algebra $A$,  under the Lie conformal bracket becomes a Lie confomal superalgebra. where the bracket is defined as follows
	\begin{equation*}
	[(x \otimes a)_\lambda (y \otimes b)]=  [x_\lambda y ]\otimes ab,~~\textit{for all}  ~~x, y \in L ~~\textit{and} ~~a, b \in A.
	\end{equation*}
\end{lemma}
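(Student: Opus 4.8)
The plan is to verify directly that the bracket $[(x\otimes a)_\lambda (y\otimes b)] = [x_\lambda y]\otimes ab$ equips $L\otimes A$ with each of the three defining properties of a Lie conformal superalgebra in Definition \ref{11}, by reducing every identity to the corresponding one already known in $L$ together with the associativity and commutativity of $A$. First I would fix the ambient structure: $L\otimes A$ is a $\mathbb{C}[\partial]$-module via $\partial(x\otimes a) = (\partial x)\otimes a$, and it is $\mathbb{Z}_2$-graded by setting $|x\otimes a| = |x|$ (viewing $A$ as concentrated in even degree), so that $(-1)^{|x\otimes a|\,|y\otimes b|} = (-1)^{|x||y|}$ throughout. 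The proposed bracket is well defined and $\mathbb{C}$-bilinear, since it is built from the bilinear $\lambda$-bracket of $L$ and the bilinear product of $A$; this also shows it lands in $\mathbb{C}[\lambda]\otimes(L\otimes A)$.

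Next I would check conformal sesquilinearity and skew-symmetry. For the former, $[\partial(x\otimes a)_\lambda (y\otimes b)] = [\partial x_\lambda y]\otimes ab = -\lambda[x_\lambda y]\otimes ab = -\lambda[(x\otimes a)_\lambda (y\otimes b)]$, and the right-slot identity follows identically from $[x_\lambda \partial y] = (\partial+\lambda)[x_\lambda y]$; neither uses any structure of $A$ beyond bilinearity. For skew-symmetry the commutativity of $A$ enters: starting from $-(-1)^{|x||y|}[(y\otimes b)_{-\lambda-\partial}(x\otimes a)] = -(-1)^{|x||y|}[y_{-\lambda-\partial} x]\otimes ba$ and applying skew-symmetry in $L$ together with $ba = ab$ recovers $[x_\lambda y]\otimes ab$, as required.

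Finally I would expand the Jacobi identity on both sides. The left-hand side equals $[x_\lambda[y_\mu z]]\otimes a(bc)$, which by the Jacobi identity in $L$ becomes $[[x_\lambda y]_{\lambda+\mu} z]\otimes abc + (-1)^{|x||y|}[y_\mu[x_\lambda z]]\otimes abc$. Expanding the two terms on the right-hand side produces the same $L$-factors with the $A$-factors appearing as $(ab)c$ and $b(ac)$; the associativity and commutativity of $A$ give $a(bc) = (ab)c = abc$ and $b(ac) = abc$, so the two sides agree term by term, sign factors included. The whole argument is therefore a routine transfer of axioms from $L$ to $L\otimes A$, and the only place demanding genuine care is the bookkeeping of the $A$-multiplications in the Jacobi identity, where associativity and commutativity of $A$ are exactly what make the reorderings valid; no obstruction arises precisely because $A$ enjoys both properties.
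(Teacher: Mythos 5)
Your proposal is correct and follows essentially the same route as the paper: a direct verification of conformal sesquilinearity, skew-symmetry, and the graded Jacobi identity, reducing each to the corresponding axiom in $L$ together with commutativity and associativity of $A$. In fact your write-up is slightly more complete than the paper's, which states the sesquilinearity item without spelling it out and leaves the $\mathbb{C}[\partial]$-module structure and grading on $L\otimes A$ implicit, both of which you make explicit.
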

\begin{proof}In order to prove $L \otimes A$ is a Lie conformal superalgbra, we need to satisfy three axioms of definition \ref{11}.\begin{itemize}
		\item Conformal sesqui-linearity:
		\item Skew symmetry: \begin{equation*}\begin{aligned}
		[( x \otimes a)_\lambda (y \otimes b) ]&= [ x_\lambda y] \otimes ab\\&= -(-1)^{|x||y|} [y_{-\partial-\lambda} x ]\otimes ba
		\\&=-(-1)^{|x||y|} [ (y \otimes b)_{-\lambda-\partial} (x \otimes a) ]
		\end{aligned}
		\end{equation*}
		for all $x, y \in L~~~\textit{and}~~~a, b \in A$.
		\item Graded Jacobi identity on $L \otimes A$\begin{equation*}
		\begin{aligned}&[(x \otimes a)_\lambda  [ (y \otimes b)_\mu (z \otimes c)] ]\\&= [(x \otimes a)_\lambda  ( [y_\mu z] \otimes bc)]  \\&=[[x _\lambda [y_\mu z]] \otimes abc]  \\&=
		([[ x_\lambda y] _\mu z]   +(-1)^{|x||y|}[y_\mu  [x_\lambda z]])\otimes abc \\&= [[ x_\lambda y] _\mu z] \otimes abc  +(-1)^{|x||y|}[y_\mu  [x_\lambda z]]\otimes abc
		\\&= [[ x_\lambda y] _\mu z] \otimes abc  +(-1)^{|x||y|}[y_\mu  [x_\lambda z]]\otimes bac
		\\&= [[ (x\otimes a)_\lambda (y\otimes b)] _\mu (z\otimes c)] +(-1)^{|x||y|}[(y\otimes b)_\mu  [(x\otimes a)_\lambda (z\otimes c)]]
		\end{aligned}\end{equation*}for all $x, y, z \in L~ and ~ a, b,c \in A$ .
	\end{itemize}
	Hence $L \otimes A $ is a Lie conformal superalgebra.
\end{proof}
\begin{lemma}\label{zari}Let a current Lie conformal superalgebra  $L \otimes A$ and let basis of $A$ be $\{b_{t}: t \in  T\}$. If $Z_{L}( L) =0$ and each super-biderivation on $L$ has a relation with centroid of $L$,  \begin{equation}\label{31}i.e,~~~~
	\varphi_{\lambda}(x, y)= \alpha_{\mu}([x_\lam y]),
	\end{equation}  Then for any
	 $\varphi_\lambda\in BDer_{\theta}(L \otimes A)$ there exist some $\a_{\mu t} \in  Cent (L \otimes A)$, where $t\in T$ such that
	 \begin{equation}
	 \varphi_{\lambda}( x \otimes a, y \otimes b) = (-1)^{|\a|(|x|+ |y|)}\sum_{t\in T}\a_{\mu t} ([ x_{\lambda} y ])\otimes b_{t}ab\end{equation}
	 for all $x, y \in L ~and~ a, b \in A$, where $|\a|= |\varphi|$.
     \end{lemma}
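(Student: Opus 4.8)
The plan is to compute $\varphi_\lambda$ on all of $L\otimes A$ by first restricting it to the embedded copy $L\otimes 1$ (where the hypothesis on $L$ directly applies), and then transporting the resulting formula to arbitrary arguments $x\otimes a,\,y\otimes b$ by exploiting that $L\otimes 1$ has \emph{trivial} centralizer in $L\otimes A$. The global sign and the algebra factor $b_t ab$ should then drop out of a single application of Lemma \ref{s} together with the centroid relation.

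First I would record the centralizer fact that drives the argument. Writing an element of $L\otimes A$ in the basis as $Z=\sum_r z_r\otimes b_r$ and using $[Z_\lambda(w\otimes 1)]=\sum_r[z_r{}_\lambda w]\otimes b_r$ together with the linear independence of the $b_r$, the condition $[Z_\lambda(w\otimes 1)]=0$ for all $w\in L$ forces each $[z_r{}_\lambda w]=0$, i.e. $z_r\in Z_L(L)=0$; hence $Z_{L\otimes A}(L\otimes 1)=0$. Next I would analyse $\varphi_\lambda$ on $L\otimes 1$: expanding the image in the basis, write $\varphi_\lambda(x\otimes 1,y\otimes 1)=\sum_t \psi^t_\lambda(x,y)\otimes b_t$ with $\psi^t_\lambda\colon L\times L\to \mathbb{C}[\lambda]\otimes L$. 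Because $1\cdot 1=1$ no mixing of basis vectors occurs, so substituting $L\otimes 1$ elements into the three defining identities of Definition \ref{12} and comparing the coefficient of each $b_t$ shows that every $\psi^t_\lambda$ is a skew-symmetric conformal super-biderivation of $L$ of degree $|\varphi|=|\alpha|$. By the standing hypothesis there are $\alpha^t\in Cent(L)$ with $\psi^t_\lambda(x,y)=\alpha^t_\mu([x_\lambda y])$; I let $\alpha_{\mu t}$ denote $\alpha^t$ (read with the usual parity twist), which is again a centroid and induces a centroid of $L\otimes A$ via $w\otimes c\mapsto\sum_t\alpha_{\mu t}(w)\otimes b_t c$.

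The decisive step is the passage from $a=b=1$ to general $a,b$. For homogeneous $x,y$ and any $w,v\in L$ I would apply Lemma \ref{s} to $\varphi$ with the second pair taken in $L\otimes 1$:
\[
[\varphi_\lambda(x\otimes a,\,y\otimes b)_{\lambda+\mu}[(w\otimes 1)_\gamma (v\otimes 1)]]=[[(x\otimes a)_\mu (y\otimes b)]_{\mu+\gamma}\,\varphi_\lambda(w\otimes 1,v\otimes 1)].
\]
On the right $[(x\otimes a)_\mu(y\otimes b)]=[x_\mu y]\otimes ab$ and $\varphi_\lambda(w\otimes 1,v\otimes 1)=\sum_t\alpha_{\mu t}([w_\lambda v])\otimes b_t$; moving each $\alpha_{\mu t}$ out of the bracket by the centroid relation produces exactly the factor $(-1)^{|\alpha|(|x|+|y|)}$ (the degree of $[x_\mu y]$ being $|x|+|y|$), while commutativity of $A$ rewrites $ab\,b_t$ as $b_t ab$. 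Comparing this with the bracket of the claimed expression $(-1)^{|\alpha|(|x|+|y|)}\sum_t\alpha_{\mu t}([x_\lambda y])\otimes b_t ab$ against $[(w\otimes 1)_\gamma(v\otimes 1)]$, again via the centroid relation, shows the two sides agree for all $w,v$.

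To finish, since $L$ is perfect the elements $[w_\gamma v]$ span $L$, so the vectors $[(w\otimes 1)_\gamma(v\otimes 1)]$ exhaust $L\otimes 1$; hence the difference between $\varphi_\lambda(x\otimes a,y\otimes b)$ and the claimed expression lies in $Z_{L\otimes A}(L\otimes 1)=0$, giving the stated formula, after which one checks that the assembled family $\{\alpha_{\mu t}\}$ sits in $Cent(L\otimes A)$. I expect the main obstacle to be precisely this transport step: the naive idea of reducing $a,b$ to $1$ by writing the arguments as brackets runs into a non-terminating recursion, and the clean route around it is the combination of Lemma \ref{s} with the vanishing of $Z_{L\otimes A}(L\otimes 1)$. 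The heaviest bookkeeping is the tracking of Koszul signs and conformal parameters in that one computation, and it is there that the global sign $(-1)^{|\alpha|(|x|+|y|)}$ is generated.
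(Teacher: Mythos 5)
Your proposal follows essentially the same route as the paper's own proof: restrict $\varphi_\lambda$ to $L\otimes 1$, decompose along the basis $\{b_t\}$ of $A$ to obtain component super-biderivations of $L$, apply the hypothesis to get the centroids $\alpha_{\mu t}$, and then transport to general arguments $x\otimes a$, $y\otimes b$ via Lemma \ref{s}, with the sign $(-1)^{|\alpha|(|x|+|y|)}$ and the factor $b_tab$ arising exactly as you describe. The only differences are organizational --- you conclude via the vanishing of $Z_{L\otimes A}(L\otimes 1)$ where the paper compares basis coefficients using coordinate functions $\delta_w:A\to F$ and $Z_L(L)=0$ --- and your explicit appeal to perfectness of $L$ (so that brackets span $L$) makes visible an assumption the paper uses only implicitly at the corresponding step.
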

    \begin{proof}
	Let $\varphi_{\lambda}\in  BDer_{\theta}(L \otimes A)$  Then there exist some elements $\varphi_{\lambda t}( x \otimes a, y \otimes b)\in C[\lam] \otimes L$,
	such that
	\begin{equation}\label{2}
	\varphi_{\lambda}( x \otimes a, y \otimes b) = \sum_{t\in T}\varphi_{\lambda t}( x \otimes a, y \otimes b)\otimes b_t
	\end{equation}
	for all $x, y \in L ~and~ a, b \in A$ Therefore
	\begin{equation}
	\begin{aligned}
	&\varphi_{\lambda t}([x_\mu y]\otimes 1, (z\otimes 1))\otimes b_t\\
	=&\varphi_{\lambda }([x_\mu y]\otimes 1, (z\otimes 1))\\
	=&\varphi_{\lambda }([(x\otimes 1)_\mu (y\otimes 1)], z\otimes 1)\\
	=&[(x\otimes 1)_\mu \varphi_{\lambda- \mu }(y\otimes 1, z\otimes 1)]- (-1)^{|x||y|}[(y\otimes 1)_{\lambda- \mu }\varphi_{\mu}(x\otimes1, z\otimes 1)]\\
	=&-(-1)^{|x|(|y|+|z|+|\varphi|)}[ \varphi_{\lambda-\mu }(y\otimes 1, z\otimes 1)_{-\partial-\mu}(x\otimes\ 1)]\\
	&+(-1)^{|x||y|+|y|(|x|+|z|+|\varphi|)}[\varphi_{\mu}(x\otimes1, z\otimes 1)_{-\partial-\lambda+\mu }(y\otimes 1)]\\
	=&(-1)^{|y|(|z|+|\varphi|)}[\varphi_{\mu}(x\otimes1, z\otimes 1)_{-\partial-\lambda+\mu }(y\otimes 1)]\\
	&-(-1)^{|x|(|y|+|z|+|\varphi|)}[ \varphi_{\lambda-\mu }(y\otimes 1, z\otimes 1)_{-\partial-\mu}(x\otimes\ 1)]\\
	=&(-1)^{|y|(|z|+ |\varphi|)}[\sum_{t\in T}{\varphi_{\mu t}(x\otimes1, z\otimes 1)\otimes b_t} _{-\partial-\lambda+\mu }(y\otimes 1)]\\
	&-(-1)^{|x|(|y|+|z|+|\varphi|)}[ \sum_{t\in T}{\varphi_{(\lambda-\mu )t}(y\otimes 1, z\otimes 1)\otimes b_t} _{-\partial-\mu}(x\otimes\ 1)]\\
	=&(-1)^{|y|(|z|+|\varphi|)}[\sum_{t\in T}{\varphi_{\mu t}(x\otimes1, z\otimes 1)} _{-\partial-\lambda+ \mu }y]\otimes b_t\\
	&-(-1)^{|x|(|y|+|z|+|\varphi|)}[ \sum_{t\in T}{\varphi_{(\lambda-\mu )t}(y\otimes 1, z\otimes 1)} _{-\partial-\mu}x]\otimes b_t\end{aligned}\end{equation}It follows that,
	\begin{equation}
	\begin{aligned}
	&\varphi_{\lambda t}([x_\mu y]\otimes 1, z\otimes 1)\\&= (-1)^{|y|(|z|+ |\varphi|)}[{\varphi_{\mu t}(x\otimes1, z\otimes 1)} _{-\partial- \lambda+ \mu }y]- (-1)^{|x|(|y|+ |z|+ |\varphi|)}[ {\varphi_{(\lambda-\mu )t}(y\otimes 1, z\otimes 1)} _{-\partial-\mu}x]
	\\&= -(-1)^{|y||x|}[y_{\lambda-\mu }{\varphi_{\mu t}(x\otimes1, z\otimes 1)}]+ [ x_\mu {\varphi_{(\lambda- \mu )t}(y\otimes 1, z\otimes 1)}].\end{aligned}\end{equation}
	We can define a map $$\bar{ \varphi}_{\lambda t}: L\times L \to C[\lambda]\otimes L$$ such that $$\bar{ \varphi}_{\lambda t}(x, y)= \varphi_{\lambda }(x\otimes 1, y\otimes 1)$$ Our above equation become as
	\begin{equation}
	\bar{\varphi}_{\lambda t}([x_\mu y] , z)=  [ x_\mu {\bar{\varphi}_{(\lambda-\mu )t}(y, z)}]-(-1)^{|y||x|}[y_{\lambda-\mu }{\bar{\varphi}_{\mu t}(x, z)}].\end{equation}
	Hence we show that if $\varphi_{\lambda }\in BDer(L\otimes A)$ then $\bar{\varphi}_{\lambda t }\in BDer(L)$ for all $t\in T$. As we know that For every biderivation of Lie conformal superalgebra, there exists a centroid map that satisfies Eq \ref{31}. Now to show the relation of centroid with the biderivation of current Lie conformal superalgebra.
By use of Lemma \ref{n}, we have
\begin{equation}\label{4}
\begin{aligned}[\varphi_\lambda (x\otimes a, y\otimes b)_{\lam+\mu} [(z\otimes 1)_\gamma (w \otimes 1)]]&= [[(x\otimes a )_\lambda (y\otimes b)]_{\lambda+\mu}\varphi_\gamma(z\otimes 1, w\otimes 1)]
\end{aligned}
\end{equation} By Eqs \ref{2} and \ref{31}, right side of Eq \ref{4} is equal to 
\begin{equation}\label{5}
\begin{aligned}
&[[(x\otimes a )_\lambda (y\otimes b)]_{\lambda+\mu}\varphi_\gamma(z\otimes 1, w\otimes 1)]
\\&=(-1)^{(|z|+|w|+|\varphi|)(|x|+|y|)}[\varphi_\gamma(z\otimes 1, w\otimes 1)_{-\partial-\lambda-\mu}[(x\otimes a )_\lambda (y\otimes b)]]
\\&=(-1)^{(|z|+|w|+|\varphi|)(|x|+|y|)}[\sum_{t\in T}{\varphi_{\gamma t}(z\otimes 1, w\otimes 1)\otimes b_t}_{-\partial-\lambda-\mu}[x_\lambda y]\otimes ab]
\\&=(-1)^{(|z|+|w|+|\varphi|)(|x|+|y|)}[\sum_{t\in T}{\bar{\varphi}_{\gamma t}(z, w)\otimes b_t}_{-\partial-\lambda-\mu}[x_\lambda y]\otimes ab]
\\&=(-1)^{(|z|+|w|+|\varphi|)(|x|+|y|)}[\sum_{t\in T}{\bar{\varphi}_{\gamma t}(z, w)}_{-\partial-\lambda-\mu}[x_\lambda y]]\otimes b_tab
\\&=(-1)^{(|z|+|w|+|\varphi|)(|x|+|y|)}[\sum_{t\in T}{\a_{\mu t}([z_{\gamma} w])}_{-\partial-\lambda-\mu}[x_\lambda y]]\otimes b_tab
\\&=(-1)^{(|z|+|w|+|\varphi|)(|x|+|y|)+|\a|(|z|+|w|)}[{[z_{\gamma} w]}_{-\partial-\lambda-\mu}\sum_{t\in T}\a_{\mu t}([x_\lambda y])]\otimes b_tab
\\&=(-1)^{(|z|+|w|+|\varphi|)(|x|+|y|)+|\a|(|z|+|w|)+(|\a|+|x|+|y|)(|z|+|w|)}[\sum_{t\in T}\a_{\mu t}([x_\lambda y])_{\lambda+\mu}[z_{\gamma} w]]\otimes b_tab
\\&=(-1)^{\varphi|(|x|+|y|)}[\sum_{t\in T}\a_{\mu t}([x_\lambda y])_{\lambda+\mu}[z_{\gamma} w]]\otimes b_tab \end{aligned}
\end{equation}
Similarly by using left hand side of Eq \ref{4}, we have
\begin{equation}\label{6}
\begin{aligned}
&[\varphi_\lambda (x\otimes a, y\otimes b)_{\lam+\mu} [(z\otimes 1)_\gamma (w \otimes 1)]]\\&=[\sum_{t\in T}{\varphi_{\lambda t} (x\otimes a, y\otimes b)\otimes b_t}_{\lam+\mu} [z_\gamma w]\otimes 1]\\&=[\sum_{t\in T}{\varphi_{\lambda t} (x\otimes a, y\otimes b)}_{\lam+\mu} [z_\gamma w]]\otimes b_t.
\end{aligned}
\end{equation}
From Eqs \ref{5} and \ref{6}, we have
\begin{equation*}
(-1)^{|\varphi|(|x|+|y|)}[\a_{\mu t}([x_\lambda y])_{\lambda+\mu}[z_{\gamma} w]]\otimes b_tab-[{\varphi_{\lambda t} (x\otimes a, y\otimes b)}_{\lam+\mu} [z_\gamma w]]\otimes b_t=0
\end{equation*}
We define a map $\delta_w : A \to F$ for all $w \in T$ Obviously, for $a \in A$ there exist some elements $\delta_w(a)\in F$ such that $a = \sum_{w\in T}
\delta_w(a) b_w$, where $ w \in T$ It follows that we have
\begin{equation}
\begin{aligned}
&(-1)^{|\varphi|(|x|+|y|)}[\sum_{t\in T}\a_{\mu t}([x_\lambda y])_{\lambda+\mu}[z_\gamma w]]\otimes b_tab-\sum_{w\in T}[{\varphi_{\lambda w} (x\otimes a, y\otimes b)}_{\lam+\mu} [z_\gamma w]]\otimes b_w
\\=&(-1)^{|\varphi|(|x|+|y|)}[\sum_{t\in T}\a_{\mu t}([x_\lambda y])_{\lambda+\mu}[z_{\gamma} w]]\otimes \sum_{w\in T} \delta_w(b_tab)b_{w}-\sum_{w\in T}[{\varphi_{\lambda w} (x\otimes a, y\otimes b)}_{\lam+\mu} [z_\gamma w]]\otimes b_w
\\=&(-1)^{|\varphi|(|x|+|y|)}\sum_{w\in T}\delta_w(b_tab)[\sum_{t\in T}\a_{\mu t}([x_\lambda y])_{\lambda+\mu}[z_{\gamma} w]]\otimes b_{w}-\sum_{w\in T}[{\varphi_{\lambda w} (x\otimes a, y\otimes b)}_{\lam+\mu} [z_\gamma w]]\otimes b_w
\\=&(-1)^{|\varphi|(|x|+|y|)}\sum_{w\in T}(\sum_{t\in T}\delta_w(b_tab)[\a_{\mu t}([x_\lambda y])_{\lambda+\mu}[z_{\gamma} w]]-[{\varphi_{\lambda w} (x\otimes a, y\otimes b)}_{\lam+\mu} [z_\gamma w]])\otimes b_w	
\\=&0
\end{aligned}
\end{equation}So we have 
\begin{equation}(-1)^{|\varphi|(|x|+|y|)}\sum_{t\in T}\delta_w(b_tab)[\a_{\mu t}([x_\lambda y])_{\lambda+\mu}[z_{\gamma} w]]= [{\varphi_{\lambda w} (x\otimes a, y\otimes b)}_{\lam+\mu} [z_\gamma w]]\end{equation}
\begin{equation}\sum_{t\in T}[(-1)^{|\varphi|(|x|+|y|)}(\delta_w(b_tab)\a_{\mu t}[x_\lambda y]- {\varphi_{\lambda w }(x\otimes a, y\otimes b)})_{\lam+\mu} [z_\gamma w]]=0\end{equation}
As $Z_L(L)=0$ so we have \begin{equation}\label{7}(-1)^{|\varphi|(|x|+|y|)}\sum_{t\in T}(\delta_w(b_tab)\a_{\mu t}[x_\lambda y])= {\varphi_{\lambda w }(x\otimes a, y\otimes b)}\end{equation} and finally by using Eqs \ref{2} and \ref{7} we get
\begin{equation}\begin{aligned}
\varphi_\lambda (x\otimes a, y\otimes b)=&(-1)^{|\varphi|(|x|+|y|)}\sum_{w\in T}\varphi_{\lambda w }(x\otimes a, y\otimes b)\otimes b_{w}
\\=&(-1)^{|\varphi|(|x|+|y|)}\sum_{w\in T}(\delta_w(b_tab)\a_{\mu t}[x_\lambda y])\otimes b_w
\\=&(-1)^{|\varphi|(|x|+|y|)}\sum_{w\in T}(\a_{\mu t}[x_\lambda y])\otimes \delta_w(b_tab)b_w
\\=&(-1)^{|\varphi|(|x|+|y|)}\sum_{w\in T}\a_{\mu t}([x_\lambda y])\otimes b_tab \end{aligned}\end{equation}
for all $x, y \in L$ and $a, b \in A$.
\end{proof}
\begin{theorem}
Let a current Lie conformal superalgebra $L \otimes A$, let $Z_L (L)=0$ and $dimA\le \infty $. If each super-biderivation on $L$ can e writte in the form  the form of Eq \ref{aiza}, Then it also holds true for $L \otimes A$ .
\end{theorem}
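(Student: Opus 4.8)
The plan is to transfer the centroid description of biderivations from $L$ to $L\otimes A$, and the bulk of the work is already carried out in Lemma \ref{zari}. So I would begin by fixing an arbitrary homogeneous $\varphi_\lambda\in BDer_\theta(L\otimes A)$ and, since $\dim A<\infty$ guarantees a finite basis $\{b_t:t\in T\}$ of $A$, applying Lemma \ref{zari} to obtain the explicit expansion
\[
\varphi_\lambda(x\otimes a,\,y\otimes b)=(-1)^{|\a|(|x|+|y|)}\sum_{t\in T}\a_{\mu t}([x_\lam y])\otimes b_t\,ab,
\]
where each $\a_{\mu t}$ comes from the centroid of $L$ furnished by the standing hypothesis. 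The whole point of the theorem is then to recognise the right-hand side as a single element of $Cent(L\otimes A)$ evaluated on the bracket $[(x\otimes a)_\lam(y\otimes b)]=[x_\lam y]\otimes ab$, so that $\varphi_\lambda$ acquires exactly the shape of Eq \ref{aiza} over $L\otimes A$.

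Next I would record that $L\otimes A$ inherits the hypotheses we need: since $A$ is commutative unital, $Z_{L\otimes A}(L\otimes A)=Z_L(L)\otimes A=0$, so $L\otimes A$ is again centerless. I would then assemble the single map $\Psi_\mu:L\otimes A\to\mathbb{C}[\mu]\otimes(L\otimes A)$ by setting, on homogeneous generators,
\[
\Psi_\mu(u\otimes c)=\sum_{t\in T}\a_{\mu t}(u)\otimes b_t\,c,\qquad u\in L,\ c\in A,
\]
and extending bilinearly. Because every $\a_{\mu t}$ is a globally defined linear map on $L$ (a genuine centroid element, not merely a map on brackets), this $\Psi_\mu$ is automatically well defined on the tensor product, so there is no separate well-definedness obstruction of the kind met in the first theorem.

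It then remains to check two things. First, that $\Psi_\mu\in Cent(L\otimes A)$: writing $U=u\otimes c$, $V=v\otimes d$, one has $[U_\nu V]=[u_\nu v]\otimes cd$, and applying the centroid identity $\a_{\mu t}([u_\nu v])=(-1)^{|u||\a|}[u_\nu\a_{\mu t}(v)]$ for each $t$, together with the commutativity $cb_t d=b_t cd$, should give $\Psi_\mu([U_\nu V])=(-1)^{|U||\a|}[U_\nu\Psi_\mu(V)]$, which is precisely the centroid condition for $L\otimes A$. Second, that $\Psi_\mu$ reproduces $\varphi_\lambda$ on brackets: substituting $u\otimes c=[x_\lam y]\otimes ab$ into the definition of $\Psi_\mu$ and comparing with the expansion from Lemma \ref{zari} recovers $\varphi_\lambda(x\otimes a,y\otimes b)$ up to the canonical super sign $(-1)^{|\a|(|x|+|y|)}$, which is exactly the statement that every super-biderivation of $L\otimes A$ is of the form of Eq \ref{aiza}.

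I expect the genuine difficulty to be purely the Koszul sign bookkeeping in the centroid verification: one must choose the normalisation of $\Psi_\mu$ (i.e.\ whether to absorb the factor $(-1)^{|\a||u|}$) so that the centroid identity and the matching with $\varphi_\lambda$ hold simultaneously, reconciling the sign $(-1)^{|\a|(|x|+|y|)}$ produced by Lemma \ref{zari} with the sign-free form of Eq \ref{aiza}. Once the degree conventions for $|\varphi|=|\a|$ and the purely even grading of $A$ are fixed, everything else is the routine transport of the centroid relation across the tensor factor via the commutative product on $A$.
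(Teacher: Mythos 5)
Your proposal takes essentially the same route as the paper: apply Lemma \ref{zari}, then package the family $\a_{\mu t}$ together with multiplication by $b_t$ into a single element of $Cent(L\otimes A)$ --- your $\Psi_\mu$, defined by $u\otimes c\mapsto\sum_{t\in T}\a_{\mu t}(u)\otimes b_t c$, is precisely the paper's $\epsilon_\mu=\sum_{t\in T}\epsilon_{\mu t}$, where the paper writes $\epsilon_t$ (with a slight abuse of notation) as $\a_t+\b_{b_t}$ but uses it exactly as your tensor-factor construction. The only difference is one of care: you make explicit the centroid verification, the well-definedness remark, and the sign $(-1)^{|\varphi|(|x|+|y|)}$ bookkeeping that the paper dismisses with ``it is clear,'' so the two arguments coincide in substance.
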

\begin{proof}Since $dim(A)\le \infty$. Let $b_t$ for $t=\{1,2,...,n\}$ be the basis of $A$ and let $\varphi_{\lambda}\in BDer(L\otimes A)$. By the Lemma \ref{zari}, there exist a family of centroids 
$\alpha_{\mu t}$ of Lie conformal superalgrbra $L$, such that \begin{equation}
\varphi_\lambda(x\otimes a, y\otimes b)= (-1)^{|\varphi|(|x|+|y|)}\sum_{w\in T}\a_{\mu t}([x_\lambda y])\otimes b_tab
\end{equation}
Now	we  define a map $\epsilon_t: L\otimes A\to L\otimes A$ such that $\epsilon_t= \a_t+\b_{b_t}$, where $\b_{b_t}:A\to A$ defined by $a\mapsto b_{t}a$. It is clear that $\epsilon_t\in Cent(L\otimes A)$, Also assume that $\epsilon=\sum_{t\in T}\epsilon_t$, then $\epsilon$ is also in $Cent(L\otimes A)$. further we have \begin{equation}\begin{aligned}
\varphi_\lambda(x\otimes a, y\otimes b)
&= (-1)^{|\varphi|(|x|+|y|)}\sum_{w\in T}\a_{\mu t}([x_{\lambda t} y])\otimes b_tab
\\&=(-1)^{|\varphi|(|x|+|y|)} \sum_{t\in T}(\a_{\mu t}+ \b_{b_t})([x_{\lam t} y]\otimes ab)
\\&=(-1)^{|\varphi|(|x|+|y|)}\sum_{t\in T}\epsilon_{\mu t }([x_{\lam t} y]\otimes ab)
\\&=(-1)^{|\varphi|(|x|+|y|)}\sum_{t\in T}\epsilon_\mu ([x_\lam y]\otimes ab)
\end{aligned}
\end{equation}
\end{proof}
\section{Lie Conformal super-commuting maps on $L \otimes A$}
\begin{definition}\label{41}
A map $\varPsi_\lambda : L \to C[\lambda]\otimes L $  is called super-commuting map of  Lie conformal superalgebra $L$, if $Z_2$-grading of $L$ is preserves and $[\varPsi_\lambda (u)_{\lambda+\mu} u] =0$, for all $u \in L$.
\end{definition}
\begin{proposition}\label{42}
$\varPsi$ is a linear super-commuting map on $L$, then
\begin{equation*}
[\varPsi_\lambda (u)_{\lambda+\mu} v] = (-1)^{|u|(|\varPsi|+|v|)}[u_{-\partial-\lambda-\mu} \varPsi (v)]
\end{equation*}for all $u, v\in L$.
\end{proposition}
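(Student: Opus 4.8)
The plan is to use the standard polarization (linearization) trick for commuting maps, adapted to the conformal $\lambda$-bracket. The defining identity from Definition \ref{41} is $[\varPsi_\lambda(u)_{\lambda+\mu} u] = 0$ for every homogeneous $u \in L$; since $\varPsi$ is linear and preserves the $Z_2$-grading, this single-variable identity can be opened up by feeding in a sum of two elements.

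First I would replace $u$ by $u+v$ in the commuting identity. Using linearity of $\varPsi_\lambda$ and the bilinearity of the $\lambda$-bracket, the left-hand side expands into four terms:
\begin{equation*}
[\varPsi_\lambda(u)_{\lambda+\mu} u] + [\varPsi_\lambda(u)_{\lambda+\mu} v] + [\varPsi_\lambda(v)_{\lambda+\mu} u] + [\varPsi_\lambda(v)_{\lambda+\mu} v] = 0.
\end{equation*}
The first and last terms vanish by the commuting property applied to $u$ and to $v$ separately, leaving the cross relation
\begin{equation*}
[\varPsi_\lambda(u)_{\lambda+\mu} v] = -[\varPsi_\lambda(v)_{\lambda+\mu} u].
\end{equation*}

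Next I would rewrite the right-hand side using the conformal skew-symmetry of Definition \ref{11}, namely $[x_\nu y] = -(-1)^{|x||y|}[y_{-\nu-\partial} x]$, applied with $x = \varPsi_\lambda(v)$, $y = u$, and bracket variable $\nu = \lambda+\mu$. The key bookkeeping point is the degree of $\varPsi_\lambda(v)$: because $\varPsi$ has degree $|\varPsi|$ and preserves grading, $|\varPsi_\lambda(v)| = |\varPsi| + |v|$, so skew-symmetry produces the sign $(-1)^{(|\varPsi|+|v|)|u|}$ and sends the bracket variable $\lambda+\mu$ to $-\partial-\lambda-\mu$. Combining this with the overall minus sign from the cross relation yields exactly $(-1)^{|u|(|\varPsi|+|v|)}[u_{-\partial-\lambda-\mu}\varPsi(v)]$, as claimed.

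The computation is routine; the only place demanding genuine care is the sign exponent, where one must track that the relevant degree is that of $\varPsi_\lambda(v)$ rather than of $v$ alone, and that in the conformal setting the derivative $\partial$ enters the shifted bracket variable $-\partial-\lambda-\mu$ and acts inside the bracket. I expect no substantive obstacle beyond this sign and variable bookkeeping, since the argument reduces to a linearization followed by a single application of skew-symmetry.
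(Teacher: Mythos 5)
Your proposal is correct and follows essentially the same route as the paper's proof: polarize the identity $[\varPsi_\lambda(u)_{\lambda+\mu}u]=0$ at $u+v$, cancel the diagonal terms, and apply conformal skew-symmetry to the cross term with the degree of $\varPsi_\lambda(v)$ taken as $|\varPsi|+|v|$, which produces exactly the sign $(-1)^{|u|(|\varPsi|+|v|)}$ and the shifted variable $-\partial-\lambda-\mu$. No gaps; your sign bookkeeping matches the paper's computation.
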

\begin{proof}
Let $\varPsi_\lam$ be a linear super-commuting map on $L$. Then\begin{equation*}
\begin{aligned}0&=[\varPsi_\lambda(u+v)_{\lam+\mu} (u+v)]
\\&=[\varPsi_{\lam}(u)+ \varPsi_\lambda(v)_{\lambda+\mu} u+ v]
\\&=[\varPsi_\lambda(u)_{\lambda+\mu} u]+ [\varPsi_\lambda(v)_{\lambda+\mu} u]+[\varPsi_\lambda(u)_{\lambda+\mu} v]+ [\varPsi_\lambda(v)_{\lambda+\mu} v]
\\&=[\varPsi_\lambda(v)_{\lambda+\mu} u]+[\varPsi_\lambda(u)_{\lambda+\mu} v]
\\&=[\varPsi_\lambda(u)_{\lambda+\mu} v]-(-1)^{|u|(|\varPsi|+|v|)}[u_{-\partial-\lambda-\mu} \varPsi_{\lambda}(v)]
\end{aligned}
\end{equation*}
for all $u, v \in L$  we have
\begin{equation*}
0=[\varPsi_\lambda(u)_{\lambda+\mu} v]-(-1)^{|u|(|\varPsi|+|v|)}[u_{-\partial-\lambda-\mu} \varPsi_\lam(v)]
\end{equation*}for all $u, v \in L$.
\end{proof} 
\begin{proposition}
Let	$\varPsi_\lam$ is a linear super-commuting map on $L\otimes A$, then
\begin{equation*}
[\varPsi_\lambda (u\otimes a)_{\lambda+\mu} (v\otimes b)] = (-1)^{|u|(|\varPsi|+|v|)}[(u\otimes a)_{-\partial-\lambda-\mu} \varPsi (v\otimes b)]
\end{equation*}for all $u, v\in L$.
\end{proposition}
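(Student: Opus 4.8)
The plan is to observe that this statement is simply Proposition~\ref{42} applied to the Lie conformal superalgebra $L\otimes A$ in place of $L$. By the earlier lemma, $L\otimes A$ is itself a Lie conformal superalgebra, and by hypothesis $\varPsi_\lambda$ is a linear super-commuting map on it, so $[\varPsi_\lambda(w)_{\lambda+\mu}w]=0$ for every homogeneous $w\in L\otimes A$. First I would take $w=(u\otimes a)+(v\otimes b)$ and run the polarization argument of Proposition~\ref{42} verbatim, with $u\otimes a$ and $v\otimes b$ playing the roles of $u$ and $v$.

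Concretely, the first step is to expand
\begin{equation*}
0=[\varPsi_\lambda((u\otimes a)+(v\otimes b))_{\lambda+\mu}((u\otimes a)+(v\otimes b))]
\end{equation*}
by conformal bilinearity into four terms. The two diagonal terms $[\varPsi_\lambda(u\otimes a)_{\lambda+\mu}(u\otimes a)]$ and $[\varPsi_\lambda(v\otimes b)_{\lambda+\mu}(v\otimes b)]$ vanish by the commuting condition of Definition~\ref{41}, leaving
\begin{equation*}
[\varPsi_\lambda(v\otimes b)_{\lambda+\mu}(u\otimes a)]+[\varPsi_\lambda(u\otimes a)_{\lambda+\mu}(v\otimes b)]=0.
\end{equation*}
The second step is to rewrite the first summand using the skew-symmetry axiom of the $\lambda$-bracket on $L\otimes A$, namely $[x_\lambda y]=-(-1)^{|x||y|}[y_{-\lambda-\partial}x]$, which converts it into $-(-1)^{(|\varPsi|+|v|)|u|}[(u\otimes a)_{-\partial-\lambda-\mu}\varPsi_\lambda(v\otimes b)]$. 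Moving it to the other side then yields exactly the claimed identity.

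The only point requiring care, and the main (mild) obstacle, is the sign bookkeeping. Here one uses that $A$ is concentrated in even degree, so that $|u\otimes a|=|u|$ and $|v\otimes b|=|v|$, together with the fact that $\varPsi$ preserves the $Z_2$-grading and hence $|\varPsi_\lambda(v\otimes b)|=|\varPsi|+|v|$. With these identifications the exponent produced by skew-symmetry is $(|\varPsi|+|v|)\,|u|$, which is precisely the claimed factor $(-1)^{|u|(|\varPsi|+|v|)}$. Since no further structural property of $A$ is invoked, the argument is a direct transcription of Proposition~\ref{42} to the current Lie conformal superalgebra $L\otimes A$.
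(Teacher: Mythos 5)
Your proposal is correct and matches the paper's approach: the paper simply declares the proposition a direct consequence of Proposition~4.2, and your argument is exactly that --- running the polarization-plus-skew-symmetry proof of Proposition~4.2 verbatim on the Lie conformal superalgebra $L\otimes A$, with the sign bookkeeping $|u\otimes a|=|u|$, $|\varPsi_\lambda(v\otimes b)|=|\varPsi|+|v|$ handled correctly. The only nitpick is that you state the commuting condition for homogeneous $w$ but then apply it to the possibly inhomogeneous element $(u\otimes a)+(v\otimes b)$; since Definition~4.1 imposes the condition for all elements of the algebra, this is harmless.
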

\begin{proof}The proof to the proposition is straight forward and direct consequence of Proposition \ref{42}.
\end{proof}
\begin{theorem}Let  a current Lie conformal superalgebra $L \otimes A$, where $Z_L(L')= 0$. If every linear super-commuting map on $L$ is of the form of $Cent(L)$. Then it also holds true for $L \otimes A$
\end{theorem}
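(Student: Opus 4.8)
The plan is to mirror the argument used for biderivations in Lemma~\ref{zari} and the theorem following it, transporting the hypothesis on $L$ to $L\otimes A$ through a fixed basis $\{b_t:t\in T\}$ of $A$. First I would take an arbitrary linear super-commuting map $\varPsi_\lambda$ on $L\otimes A$ and expand it along the basis, writing $\varPsi_\lambda(x\otimes a)=\sum_{t\in T}\varPsi_{\lambda t}(x\otimes a)\otimes b_t$ with $\varPsi_{\lambda t}(x\otimes a)\in C[\lambda]\otimes L$. Specializing $a=1$, I would define the restricted maps $\bar\varPsi_{\lambda t}:L\to C[\lambda]\otimes L$ by $\bar\varPsi_{\lambda t}(x)=\varPsi_{\lambda t}(x\otimes 1)$; these are the objects to which the assumption on $L$ will be applied.

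The next step is to verify that each $\bar\varPsi_{\lambda t}$ is a linear super-commuting map on $L$. Because $\varPsi_\lambda$ is commuting on $L\otimes A$, we have $[\varPsi_\lambda(u\otimes 1)_{\lambda+\mu}(u\otimes 1)]=0$ for every $u$; expanding the left-hand side along the basis and using $[(x\otimes 1)_\nu(y\otimes 1)]=[x_\nu y]\otimes 1$ gives $\sum_{t\in T}[\bar\varPsi_{\lambda t}(u)_{\lambda+\mu}u]\otimes b_t=0$, and the linear independence of the $b_t$ forces $[\bar\varPsi_{\lambda t}(u)_{\lambda+\mu}u]=0$ for every $t$ and $u$; grading preservation is inherited in the same way. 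By hypothesis each $\bar\varPsi_{\lambda t}$ then lies in $Cent(L)$, so there are centroids $\alpha^{(t)}$ of $L$ with $\bar\varPsi_{\lambda t}=\alpha^{(t)}_\lambda$ and $\alpha^{(t)}_\lambda([x_\mu y])=[\alpha^{(t)}_\lambda(x)_\mu y]$.

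Having identified the components at $a=1$, I would next determine $\varPsi_\lambda(x\otimes a)$ for general $a\in A$. Using Proposition~\ref{42} in its $L\otimes A$ form together with the centroid property of the $\bar\varPsi_{\lambda t}$, I would evaluate $[\varPsi_\lambda(x\otimes a)_{\lambda+\mu}[(z\otimes 1)_\gamma(w\otimes 1)]]$ in two different ways, exactly as in the derivation of Eqs~\eqref{5} and~\eqref{6}: one side reduces to $\sum_{t\in T}[\varPsi_{\lambda t}(x\otimes a)_{\lambda+\mu}[z_\gamma w]]\otimes b_t$, while the other, after rewriting the relevant terms through the centroids $\alpha^{(t)}$, becomes the expected expression weighted by the structure constants of $A$. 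Introducing the coordinate functionals $\delta_w:A\to F$ defined by $a=\sum_{w\in T}\delta_w(a)b_w$ and comparing the coefficient of each $b_w$ yields, for all $z,w\in L$, an identity $[X_{\lambda+\mu}[z_\gamma w]]=0$ in which $X$ is the difference between $\varPsi_{\lambda w}(x\otimes a)$ and its conjectured centroid value. Since every bracket $[z_\gamma w]$ lies in $L'$ and $Z_L(L')=0$, this forces $X=0$, pinning down each $\varPsi_{\lambda w}(x\otimes a)$ in terms of the $\alpha^{(t)}$ and the multiplication table of $A$.

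Finally I would reassemble the pieces exactly as in the biderivation theorem: setting $\epsilon_t=\alpha^{(t)}+\beta_{b_t}$, where $\beta_{b_t}:A\to A$ is the multiplication $a\mapsto b_t a$, each $\epsilon_t$ belongs to $Cent(L\otimes A)$, hence so does $\epsilon=\sum_{t\in T}\epsilon_t$; the previous step then shows $\varPsi_\lambda=\epsilon$ up to the overall sign $(-1)^{|\varPsi|(|x|+|y|)}$ recorded throughout, so $\varPsi_\lambda\in Cent(L\otimes A)$. The main obstacle is the third step: the super-commuting condition does not separate the $L$ and $A$ tensor factors, so controlling $\varPsi_\lambda(x\otimes a)$ for a general $a$ rather than only for $a=1$ requires the two-sided evaluation above, and it is precisely the centralizer hypothesis $Z_L(L')=0$ that upgrades the resulting bracket identity into an equality of the maps themselves. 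Keeping the signs and the $A$-multiplication consistent when verifying that the assembled $\epsilon$ genuinely satisfies the centroid identity on mixed tensors $x\otimes a$ is the remaining delicate bookkeeping.
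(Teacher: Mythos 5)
Your proposal is correct, and its skeleton --- the basis expansion $\varPsi_{\lambda}(u\otimes a)=\sum_{t\in T}\varPsi_{\lambda t}(u\otimes a)\otimes b_{t}$, restriction to $a=1$, identification of the restricted maps $\bar{\varPsi}_{\lambda t}$ as super-commuting maps on $L$, application of the hypothesis, determination of the components for general $a$, and reassembly --- is the same as the paper's. The genuine divergence is in the middle step, where you pin down $\varPsi_{\lambda w}(x\otimes a)$: you pair $\varPsi_{\lambda}(x\otimes a)$ against commutators $[(z\otimes 1)_{\gamma}(w\otimes 1)]$, imitating the biderivation computation of Eqs \ref{5} and \ref{6} in Lemma \ref{zari}, which forces you to invoke the centroid property of the $\bar{\varPsi}_{\lambda t}$ (i.e.\ the hypothesis on $L$) already at this stage and to conclude with $Z_{L}(L')=0$, since brackets $[z_{\gamma}w]$ only sweep out $L'$. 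The paper instead pairs against arbitrary single elements $v\otimes 1$ using the linearized commuting identity (Proposition \ref{42} in its $L\otimes A$ form), which requires only the commuting property of the $\bar{\varPsi}_{\lambda t}$ --- the hypothesis is deferred to the final verification --- and concludes with the condition $Z_{L}(L)=0$; since $Z_{L}(L)\subseteq Z_{L}(L')$, this is a weaker requirement, so the paper's route in fact establishes the theorem under a milder centralizer assumption, while yours uses the stated hypothesis $Z_{L}(L')=0$ at face value and keeps the argument formally parallel to the biderivation case. The finishes also differ, though only cosmetically: the paper verifies the centroid identity $\varPsi_{\lambda}([(u\otimes a)_{\mu}(v\otimes b)])=(-1)^{|\varPsi||u|}[(u\otimes a)_{\mu}\varPsi_{\lambda}(v\otimes b)]$ directly from the derived formula $\varPsi_{\lambda}(u\otimes a)=\pm\sum_{t\in T}\bar{\varPsi}_{\lambda t}(u)\otimes b_{t}a$, whereas you assemble the operator $\epsilon=\sum_{t\in T}\epsilon_{t}$ as in the biderivation theorem; both are valid. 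One small point in your favour: you deduce $[\bar{\varPsi}_{\lambda t}(u)_{\lambda+\mu}u]=0$ from the linear independence of the $b_{t}$, which is the correct justification, whereas the paper invokes $Z_{L}(L)=0$ at that spot, which is not what that step actually needs.
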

\begin{proof}
Let  a linear super-commuting map  of the current Lie conformal superalgebra $(L \otimes A)$ is $\varPsi_{\lambda}$. Then there exist some elements $\varPsi_{\lambda t}( u\otimes a)\in L$, such that for $t\in T$.
\begin{equation}\label{43}
\varPsi_{\lambda}(u\otimes a) = \sum_{t\in T}\varPsi_{\lambda t}(u\otimes a)\otimes b_t
\end{equation} for all $u, v\in L ~and~ a, b \in A$. Therefore
\begin{equation*}
\begin{aligned}
0=&[\varPsi_{\lambda }(u\otimes 1)_{\lam +\mu} (u\otimes 1)]
\\=&[{\varPsi_{\lambda t}(u\otimes 1)\otimes b_{t}}_{\lam +\mu} (u\otimes 1)]
\\=&[{\varPsi_{\lambda t}(u\otimes 1)}_{\lam +\mu} u]\otimes b_{t}\end{aligned}\end{equation*}
As $Z_{L}(L)=0$ , so we have
\begin{equation*}
[\varPsi_{\lambda t}(u\otimes 1)_{\lam +\mu} u]=0.
\end{equation*}
We define a map $$\bar{ \varPsi}_{\lambda t}: L \to L[\lambda]\otimes A$$ such that $$\bar{ \varPsi}_{\lambda t}(u)= \varPsi_{\lambda }(u\otimes 1)$$ So obviously, $ \bar{ \varPsi}_{\lambda t}$ is a linear super-commuting map on $L$ and $|\varPsi|=|\bar{ \varPsi}|$. Moreover, by Proposition \ref{42} and Eq \ref{43}, we have 
\begin{equation*}
\begin{aligned}0=&[\varPsi_{\lambda} (u \otimes a )_{\lam+\mu} (v \otimes 1)] -(-1)^{|u|(|\varPsi|+|v|)} [(u \otimes a)_{-\partial- \lambda- \mu}\varPsi_{\lambda }(v \otimes 1)]
\\=& [\varPsi_{\lambda} (u \otimes a )_{\lam+ \mu} (v \otimes 1)] +(-1)^{|u|(|\varPsi|+|v|)+((|\varPsi|+|v|)|u|)} [\varPsi_{\lambda }(v \otimes 1)_{\lambda+\mu}(u \otimes a)]
\\=& [\varPsi_{\lambda} (u \otimes a )_{\lam+ \mu} (v \otimes 1)]+[\varPsi_{\lambda }(v \otimes 1)_{\lambda+\mu}(u \otimes a)]
\\=& [\sum_{t\in T}{\varPsi_{\lambda t} (u \otimes a )\otimes b_t}_{\lam+ \mu} (v \otimes 1)]+[\sum_{t\in T}{\varPsi_{\lambda t}(v \otimes 1)\otimes b_t}_{\lambda+\mu}(u \otimes a)]
\\=& \sum_{t\in T}[{\varPsi_{\lambda t} (u \otimes a )}_{\lam+ \mu} v ]\otimes b_t+\sum_{t\in T} [{\varPsi_{\lambda t}(v \otimes 1)}_{\lambda+\mu}u]\otimes b_ta
\\=& \sum_{t\in T}[{\varPsi_{\lambda t} (u\otimes a)}_{\lam+ \mu} v]\otimes b_t+ \sum_{t\in T}[{\bar{\varPsi}_{\lambda t}(v)}_{\lambda+\mu}u]\otimes b_ta 
\end{aligned}
\end{equation*}
We define a map $\delta_w : A \to F$ for all $w \in T$ Obviously, for $a \in A$ there exist some elements $\delta_w(a)\in F$ such that $a = \sum_{w\in T}
\delta_w(a) b_w$, where $ w \in T$. It follows that we have
\begin{equation*}
\begin{aligned}
&\sum_{t\in T}[{\varPsi_{\lambda t} (u \otimes a)}_{\lam+ \mu} v]\otimes b_t+ \sum_{t\in T} [{\bar{\varPsi}_{\lambda t}(v)}_{\lambda+\mu}u ]\otimes b_ta
\\&=\sum_{t\in T}[{\varPsi_{\lambda w} (u \otimes a)}_{\lam+ \mu} v]\otimes b_w+ \sum_{t\in T} [{\bar{\varPsi}_{\lambda t}(v)}_{\lambda+\mu}u ]\otimes (\sum_{w\in T}\delta_w(b_ta)b_w)
\\&=\sum_{w\in T}[{\varPsi_{\lambda w} (u \otimes a)}_{\lam+ \mu} v]\otimes b_w+ \sum_{t\in T}\sum_{w\in T}\delta_w(b_ta)[{\bar{\varPsi}_{\lambda t}(v)}_{\lambda+\mu}u ]\otimes b_w
\\&=\sum_{w\in T}[{\varPsi_{\lambda w} (u \otimes a)}_{\lam+ \mu} v]\otimes b_w+\sum_{t\in T} \sum_{w\in T}\delta_w(b_ta) (-1)^{|u|(|\bar\varPsi|+ |v|)}[v_{-\partial- \lambda-\mu} {\bar{\varPsi}_{\lambda w}(u)}]\otimes b_w
\\&=\sum_{w\in T}[{\varPsi_{\lambda w} (u \otimes a)}_{\lam+ \mu} v]\otimes b_w-(-1)^{|\bar{\varPsi}|(|u|+ |v|)} \sum_{t\in T}\sum_{w\in T}\delta_w(b_ta)[{\bar{\varPsi}_{\lambda t}(u)}_{\lambda+ \mu} v]\otimes b_w.\end{aligned}
\end{equation*}
Since $Z_{L}(L)=0$, we have
\begin{equation*}\varPsi_{\lambda w} (u \otimes a)= (-1)^{|\bar\varPsi|(|u|+ |v|)} \sum_{t\in T}\delta_w(b_ta)\bar{\varPsi}_{\lambda t}(u)
\end{equation*}
for all $u \in L$ and $a\in A$. It follows that
\begin{equation*}\begin{aligned}
\varPsi_{\lambda } (u \otimes a)=&\sum_{w\in T}\varPsi_{\lambda w} (u \otimes a)\otimes b_{w}
\\=& (-1)^{|\bar\varPsi|(|u|+ |v|)}\sum_{w\in T}\sum_{t\in T}\delta_w(b_ta)\bar{\varPsi}_{\lambda t}(u)\otimes b_w
\\=& (-1)^{|\bar\varPsi|(|u|+ |v|)}\sum_{t\in T}\bar{\varPsi}_{\lambda t}(u)\otimes \sum_{w\in T}\delta_w(b_ta) b_w
\\=& (-1)^{|\bar\varPsi|(|u|+ |v|)}\sum_{t\in T}\bar{\varPsi}_{\lambda t}(u)\otimes b_ta
\end{aligned}
\end{equation*}
for all $u \in L$ and $a \in A.$ By Proposition \ref{42}, we have
\begin{equation*}
\begin{aligned}
\varPsi_\lambda[(u\otimes a)_{\mu} (v\otimes b)]=&\varPsi_{\lambda}([u_{\mu} v]\otimes ab)
\\=&(-1)^{|\bar\varPsi|(|u|+|v|)}\sum_{t\in T}\bar{\varPsi}_{\lambda t}([u_{\mu} v])\otimes b_t (ab)
\\=&(-1)^{|\bar\varPsi|(|u|+|v|)}(-1)^{|\bar\varPsi||u|}\sum_{t\in T}([u_{\mu} \bar{\varPsi}_{\lambda t}(v)])\otimes b_t (ab)
\\=&(-1)^{|\bar\varPsi||u|}([(u\otimes a)_{\mu} (\sum_{t\in T}(-1)^{|\bar\varPsi|(|u|+|v|)}\bar{\varPsi}_{\lambda t}(v)\otimes b_t (b))])
\\=&(-1)^{|\varPsi||u|}[(u\otimes a)_{\mu} {\varPsi}_\lambda(v\otimes b)]
\end{aligned}
\end{equation*}
for all $u, v \in L$ and $a, b \in A$ Hence $\varPsi_\lambda \in Cent (L\otimes A)$.
\end{proof}

\end{document}